\documentclass[11pt]{article}
\usepackage[a4paper,margin=1in]{geometry}
\usepackage{amsmath,amssymb,amsthm,mathtools}
\usepackage{enumitem}
\usepackage[colorlinks=true,linkcolor=blue,citecolor=blue,urlcolor=blue]{hyperref}

\newtheorem{theorem}{Theorem}[section]
\newtheorem{lemma}[theorem]{Lemma}
\newtheorem{proposition}[theorem]{Proposition}

\theoremstyle{remark}
\newtheorem{remark}[theorem]{Remark}
\newtheorem{example}[theorem]{Example}

\newcommand{\qbinom}[2]{\genfrac{[}{]}{0pt}{}{#1}{#2}_{q}}
\newcommand{\A}{\mathcal A}

\title{Higher-Order Cyclotomic Congruences for $q$-Secant\\
and Generalized $q$-Euler Numbers}
\author{Jiang Zeng\thanks{College of Mathematics and Physics,
Wenzhou University, Wenzhou 325035, China; Universit\'e Claude Bernard
Lyon~1, CNRS UMR~5208, Institut Camille Jordan, France.
E-mail: \texttt{zeng@math.univ-lyon1.fr}.}}
\date{July 21, 2026}

\begin{document}
\maketitle

\begin{abstract}
Let $\A(2n)$ denote the set of up--down alternating permutations of
$\{1,2,\ldots,2n\}$, and let
\[
 E_{2n}(q)=\sum_{\sigma\in\A(2n)}q^{\operatorname{inv}(\sigma)}.
\]
Andrews and Foata proved that
$E_{2n}(q)\equiv q^{2n(n-1)}\pmod{(1+q)^2}$, and Liu recently obtained the
cubic refinement
\[
 E_{2n}(q)\equiv q^{2n(n-1)}-\binom n2(1+q)^2
 \pmod{(1+q)^3}.
\]
Using the reciprocal generating function for the $q$-secant numbers, a
third-order expansion of Gaussian coefficients at $q=-1$, finite
differences, and Newton interpolation, we prove the
fourth-order refinement
\[
 E_{2n}(q)\equiv q^{2n(n-1)}-\binom n2(1+q)^2
 +\binom n2(2n^2-2n-3)(1+q)^3
 \pmod{(1+q)^4}.
\]
More generally, the recurrence yields an effective procedure for computing
the expansion modulo $(1+q)^K$ for any prescribed $K$.  We then apply the
same local-expansion strategy to the generalized $q$-Euler numbers
$E_{pn\mid p}(q)$ of Sagan and Zhang.  For every prime $p$, we prove uniform
congruences modulo $[p]_q^3$ and $[p]_q^4$; the fourth-order term is governed
by a central $q$-Wolstenholme-type quotient associated with
${2p\brack p}_q$.  Thus the fourth-order secant congruence is the first case
of a general higher-cyclotomic method.
\end{abstract}

\medskip
\noindent\textbf{Keywords:}
$q$-secant numbers; generalized $q$-Euler numbers; Gaussian coefficients;
cyclotomic congruences; Newton interpolation.

\medskip
\noindent\textbf{2020 Mathematics Subject Classification:}
Primary 05A30; Secondary 11B68, 11B65.

\section{Introduction}

A permutation $\sigma=\sigma_1\sigma_2\cdots\sigma_m$ is called
\emph{up--down alternating} if
\[
 \sigma_1<\sigma_2>\sigma_3<\sigma_4>\cdots.
\]
Write $\A(m)$ for the set of these permutations and define
\[
 E_{2n}(q)=\sum_{\sigma\in\A(2n)}q^{\operatorname{inv}(\sigma)}.
 \tag{1.1}\label{eq:defE}
\]
These polynomials are the inversion enumerators usually called the
$q$-secant numbers.  Their analytic definition is given by the reciprocal
generating function
\[
 \sum_{n\geq0}E_{2n}(q)\frac{u^{2n}}{(q;q)_{2n}}
 =
 \left(\sum_{n\geq0}(-1)^n\frac{u^{2n}}{(q;q)_{2n}}\right)^{-1}.
 \tag{1.2}\label{eq:genfun}
\]
Here $(a;q)_m=\prod_{j=0}^{m-1}(1-aq^j)$.

Andrews and Foata~\cite{AndrewsFoata} proved
\[
 E_{2n}(q)\equiv q^{2n(n-1)}\pmod{(1+q)^2}.
 \tag{1.3}\label{eq:AF}
\]
Liu~\cite{Liu} recently gave a combinatorial proof of the sharper congruence
\[
 E_{2n}(q)\equiv
 q^{2n(n-1)}-\binom n2(1+q)^2
 \pmod{(1+q)^3}.
 \tag{1.4}\label{eq:Liu}
\]
Our first purpose is to determine the next term in the $(1+q)$-adic
expansion and, more importantly, to place its computation in a systematic
higher-order framework.

\begin{theorem}\label{thm:main}
For every integer $n\geq0$,
\begin{align}
 E_{2n}(q)\equiv{}&q^{2n(n-1)}-\binom n2(1+q)^2 \notag\\
 &+\binom n2(2n^2-2n-3)(1+q)^3
 \pmod{(1+q)^4}.
 \tag{1.5}\label{eq:main}
\end{align}
\end{theorem}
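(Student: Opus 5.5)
We plan to work directly from the reciprocal generating function \eqref{eq:genfun}. Multiplying both sides by the even $q$-cosine series and comparing coefficients of $u^{2n}$ gives the recurrence
\[
 \sum_{k=0}^{n}(-1)^{n-k}\qbinom{2n}{2k}E_{2k}(q)=\delta_{n,0},
\]
that is, $E_{2n}(q)=\sum_{k<n}(-1)^{n-k+1}\qbinom{2n}{2k}E_{2k}(q)$. We work in $\mathbb{Z}[q]/(1+q)^4$ with $t=1+q$, so $q=-1+t$. The proof is then an induction on $n$. We assume that each $E_{2k}(q)$ with $k<n$ has the form $q^{2k(k-1)}+a_k t^2+b_k t^3$, where $a_k=-\binom k2$ and $b_k=\binom k2(2k^2-2k-3)$. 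The task is to show that the recurrence reproduces the same form for $E_{2n}(q)$.

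\textbf{Key ingredient: expanding the even Gaussian coefficients to order $t^3$.} We need $\qbinom{2n}{2k}$ modulo $t^4$. At $q=-1$ we have $\qbinom{2n}{2k}=\binom nk$, which is the $q\to-1$ analogue of Lucas' theorem. To get the higher terms, write
\[
 \qbinom{2n}{2k}=\prod_{j=1}^{2k}\frac{1-q^{2n-2k+j}}{1-q^j}.
\]
Pair the factors with $j$ even and $j$ odd. For even $j=2i$, the ratio $(1-q^{2m+2i})/(1-q^{2i})$ is a quotient of $q^2$-numbers, and $q^2=1-2t+t^2$, so it can be expanded around $q^2=1$ using harmonic-type sums. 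For odd $j$, the ratio $(1+q\cdot q^{\cdot})/(1+q\cdot\cdots)$ involves $1-q^{\mathrm{odd}}$, which equals $2+O(t)$, so it has a Taylor expansion in $t$ with polynomial coefficients in $n,k$. The outcome should be
\[
 \qbinom{2n}{2k}\equiv\binom nk\bigl(1+\alpha_1(n,k)t+\alpha_2(n,k)t^2+\alpha_3(n,k)t^3\bigr)\pmod{t^4},
\]
with each $\alpha_i$ an explicit polynomial in $n$ and $k$ of low degree. The appearance of $\binom nk$ times a polynomial is the point that makes the next step work.

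\textbf{Reducing to polynomial identities.} We also expand $q^{2k(k-1)}=(-1+t)^{2k(k-1)}$ binomially modulo $t^4$. Substituting everything into the recurrence turns each coefficient of $t^j$, for $j\le3$, into a sum of the form
\[
 \sum_{k}(-1)^{n-k}\binom nk P_j(n,k),
\]
with $P_j$ polynomial in $k$. Such a sum is $(-1)^n$ times an $n$-th finite difference, and it vanishes whenever $\deg_k P_j<n$. For general $n$, we rewrite $P_j(n,k)$ in the Newton basis $\binom kr$ and use
\[
 \sum_k(-1)^{n-k}\binom nk\binom kr=\delta_{n,r}.
\]
This collapses each coefficient to finitely many boundary terms. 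Isolating the $k=n$ term then gives $b_n$, and the induction closes provided that term equals $\binom n2(2n^2-2n-3)$. A complementary route avoids the inductive bookkeeping. One first observes that the coefficients of $t^j$ in $E_{2n}(q)-q^{2n(n-1)}$ are polynomials in $n$ of bounded degree, degree at most $4$ for $t^3$, by the same finite-difference argument. It then suffices to check Theorem~\ref{thm:main} for $n=0,\dots,4$ directly and conclude by Newton interpolation.

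\textbf{Main obstacle.} We expect the hardest step to be the third-order expansion of $\qbinom{2n}{2k}$ at $q=-1$, because of the odd-index factors and the need to track the $t^3$ coefficient exactly. We would first try to factor
\[
 \qbinom{2n}{2k}=\genfrac{[}{]}{0pt}{}{n}{k}_{q^2}\cdot R(q),
\]
where $R$ is a ratio of products of $(1+q^{\mathrm{odd}})$ terms. We would expand $\log R$ in powers of $t$ and expand the $q^2$-binomial around $q^2=1$ via $\log$ of its factors. This should give each $\alpha_i$ as a polynomial in $n,k$, which is exactly what the finite-difference collapse requires.
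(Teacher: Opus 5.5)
Your plan is essentially the paper's proof: the convolution recurrence from \eqref{eq:genfun}, the factorization of $\qbinom{2n}{2k}$ as ${n\brack k}_{q^2}$ times odd factors with a logarithmic expansion to order $t^3$ (Lemma~\ref{lem:qbinom-expansion}), and then finite differences with Newton interpolation. Your induction route is just the verification form of the same computation.

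One caution concerns your claim that the $t^3$ coefficient has degree at most $4$ in $n$. The bound is true, but it does not follow from naive degree counting. That count gives $6$, as in the paper, which is why the paper checks values up to $n=6$. To justify checking only $n\le4$, take $\ell=n-k$ and note that $\log\bigl(\qbinom{2n}{2k}(1-t)^{2\ell(\ell-1)}/\binom nk\bigr)$ has $t$-coefficient $-2\ell(n-1)$ and $t^2,t^3$-coefficients quadratic in $k$. Consequently, the only degree-$4$ contribution to the normalized $t^3$ coefficient ($D_{n,k}$ in \eqref{eq:Dnk-expansion}) comes from the cross term $k\ell^2(\ell-1)$.
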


The proof uses only \eqref{eq:genfun}.  Comparing coefficients in that
identity gives a convolution recurrence.  The other ingredient is the
Taylor expansion of $\qbinom{2n}{2k}$ at $q=-1$ through order three.
Our method also explains how the correction terms are discovered.  At each
order we first use the recurrence to prove that the unknown coefficient is
a polynomial in $n$ of bounded degree.  We then compute the required finite
set of initial values and recover the polynomial by Newton's interpolation
formula.

In Section~6 we extend the method to the generalized $q$-Euler
numbers of Sagan and Zhang.  The local parameter
$1+q=\Phi_2(q)$ is
replaced by $[p]_q=\Phi_p(q)$, and a block form of the $q$-Ljunggren
congruence replaces the expansion at $q=-1$.  For the fourth-order block
term we use Zudilin's expansion of Gaussian coefficients at roots of
unity.  We prove uniform congruences for $E_{pn\mid p}(q)$ modulo both
$[p]_q^3$ and $[p]_q^4$.
The latter result shows that the fourth-order correction is controlled by a
single central quotient attached to ${2p\brack p}_q$.  This uniform
cyclotomic statement, together with the arbitrary-order recurrence for the
$q$-secant numbers, is the principal structural contribution of the paper.

\section{A recurrence for the $q$-secant numbers}

Multiplying both sides of \eqref{eq:genfun} by the series on the right before
inversion and comparing coefficients of $u^{2n}$ yields the following
standard recurrence.

\begin{lemma}\label{lem:recurrence}
For $n\geq1$,
\[
 E_{2n}(q)=\sum_{k=1}^{n}(-1)^{k-1}
 \qbinom{2n}{2k}E_{2n-2k}(q),
 \qquad E_0(q)=1.
 \tag{2.1}\label{eq:recurrence}
\]
\end{lemma}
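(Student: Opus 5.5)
The recurrence is read off directly from the reciprocal generating function \eqref{eq:genfun}. Write
\[
 F(u)=\sum_{n\geq0}E_{2n}(q)\frac{u^{2n}}{(q;q)_{2n}},\qquad
 G(u)=\sum_{k\geq0}(-1)^k\frac{u^{2k}}{(q;q)_{2k}}.
\]
Identity \eqref{eq:genfun} says $F(u)G(u)=1$ as formal power series in $u$ with coefficients in $\mathbb{Q}(q)$. This product is legitimate because $G$ has constant term $1$ and is therefore invertible in $\mathbb{Q}(q)[[u]]$.

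The plan is to multiply out the product, compare coefficients of $u^{2n}$, and multiply by $(q;q)_{2n}$.

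1. Only even powers of $u$ occur in $F$ and $G$, so the coefficient of $u^{2n}$ in $FG$ is
\[
 \sum_{k=0}^{n}(-1)^k\frac{E_{2n-2k}(q)}{(q;q)_{2n-2k}(q;q)_{2k}}.
\]

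2. For $n=0$ this coefficient equals $1$, which gives $E_0(q)=1$.

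3. For $n\geq1$ the coefficient of $u^{2n}$ in the constant series $1$ is zero. Multiplying the vanishing sum by $(q;q)_{2n}$ and using
\[
 \qbinom{2n}{2k}=\frac{(q;q)_{2n}}{(q;q)_{2k}(q;q)_{2n-2k}}
\]
gives
\[
 \sum_{k=0}^{n}(-1)^k\qbinom{2n}{2k}E_{2n-2k}(q)=0.
\]

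4. The $k=0$ term is $E_{2n}(q)$. Moving the remaining terms to the other side turns $-(-1)^k$ into $(-1)^{k-1}$, which is exactly \eqref{eq:recurrence}.

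There is no genuine obstacle here; the argument is purely formal. The only point needing care is the framework. If $E_{2n}(q)$ is taken as the inversion enumerator \eqref{eq:defE}, then \eqref{eq:genfun} is the known theorem cited in the introduction, and the argument above derives the recurrence from it. If instead \eqref{eq:genfun} is taken as the definition, the recurrence is equivalent to it.

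As a by-product, the recurrence shows inductively that each $E_{2n}(q)$ is a polynomial in $\mathbb{Z}[q]$. This holds because the Gaussian coefficients are polynomials with integer coefficients. That polynomiality is what later allows congruences modulo powers of $1+q$ to be taken.
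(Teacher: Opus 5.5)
Your proposal is correct and follows essentially the same argument as the paper's proof. Both compare coefficients of $u^{2n}$ in the product of the two series in \eqref{eq:genfun}, observe that this coefficient vanishes for $n\geq1$, multiply by $(q;q)_{2n}$ to produce the Gaussian coefficients, and isolate the $k=0$ term.
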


\begin{proof}
The coefficient of $u^{2n}$ in the product in question is
\[
 \sum_{k=0}^{n}
 \frac{(-1)^k}{(q;q)_{2k}}
 \frac{E_{2n-2k}(q)}{(q;q)_{2n-2k}}.
\]
It vanishes for $n\geq1$.  Multiplication by $(q;q)_{2n}$ and isolation of
the term $k=0$ give \eqref{eq:recurrence}.
\end{proof}

\section{Gaussian coefficients near \texorpdfstring{$q=-1$}{q=-1}}

Set
\[
 t=1+q,\qquad q=-1+t.
\]
We first record the expansion needed in the proof of Theorem~\ref{thm:main}.

\begin{lemma}\label{lem:qbinom-expansion}
Let $0\leq k\leq n$ and put $\ell=n-k$.  Then
\begin{align}
 \qbinom{2n}{2k}\bigg|_{q=-1+t}
 =\binom nk\bigl(1+a_{n,k}t+b_{n,k}t^2+c_{n,k}t^3\bigr)+O(t^4),
 \tag{3.1}\label{eq:qbinomexp}
\end{align}
where
\begin{align}
 a_{n,k}&=-2k\ell,                                           \tag{3.2}\label{eq:a}\\
 b_{n,k}&=\frac{k\ell(12k\ell+4n-5)}6,                     \tag{3.3}\label{eq:b}\\
 c_{n,k}&=\frac{k\ell(4n-3)}6
 -\frac{k^2\ell^2(4n-5)}3-\frac43k^3\ell^3.              \tag{3.4}\label{eq:c}
\end{align}
\end{lemma}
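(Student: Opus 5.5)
We will write the Gaussian coefficient as a product and expand each factor at $q=-1+t$. We have
\[
 \qbinom{2n}{2k}=\prod_{j=1}^{2k}\frac{1-q^{2\ell+j}}{1-q^{j}},
\]
and we pair each even index $j=2i$ with the odd index $j=2i-1$. Since $2\ell+j$ has the same parity as $j$, both numerator and denominator split into an even part and an odd part. The even part is
\[
 \prod_{i=1}^{k}\frac{1-q^{2(\ell+i)}}{1-q^{2i}}=\genfrac{[}{]}{0pt}{}{n}{k}_{q^2}.
\]
The odd part is
\[
 \prod_{i=1}^{k}\frac{1-q^{2\ell+2i-1}}{1-q^{2i-1}}.
\]
At $q=-1$ the even part tends to $\binom nk$ and the odd part tends to $1$, since $1-q^{m}\to 2$ for odd $m$. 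This gives the leading factor $\binom nk$. The plan is therefore to compute
\[
 \log\frac{\qbinom{2n}{2k}}{\binom nk}
\]
as a sum of logarithms of single factors, expanded through $t^3$, and then to exponentiate.

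\textbf{Odd factors.} For odd $m$ we set
\[
 f_m(t)=\log\frac{1-(-1+t)^m}{2}=\log\frac{1+(1-t)^m}{2}.
\]
Expanding $(1-t)^m$ gives $f_m(t)=\alpha_1(m)t+\alpha_2(m)t^2+\alpha_3(m)t^3+O(t^4)$ with explicit polynomial coefficients. Explicitly, $\alpha_1=-m/2$, $\alpha_2=-m/4$ (as $m^2$ terms cancel), and $\alpha_3$ is a polynomial in $m$ of degree at most $3$. The odd part contributes
\[
 \sum_{i=1}^{k}\bigl(f_{2\ell+2i-1}-f_{2i-1}\bigr),
\]
and each coefficient is then a power sum in $i$ that we evaluate in closed form.

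\textbf{Even factors.} Here $q^2=1-2t+t^2=:1-s$. We use the classical expansion of the Gaussian coefficient near $1$:
\[
 \log\frac{1-x^{N}}{N(1-x)}
\]
at $x=1-s$ is a power series in $s$ whose coefficients are polynomials in $N$, namely $-\tfrac{N-1}{2}s-\tfrac{N^2-1}{24}s^2\cdots$ (to be recomputed). Summing $g_{\ell+i}-g_i$ over $i=1,\dots,k$ and substituting $s=2t-t^2$ gives the even-part contribution through $t^3$. Again only power sums $\sum i^r$ with $r\le 3$ appear.

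\textbf{Assembly.} Adding the two logarithms gives $L=A t+Bt^2+Ct^3$, with $A,B,C$ polynomials in $k$ and $\ell$. The first-order check should yield $A=-2k\ell$: the even part gives $-\tfrac12\cdot\tfrac{k\ell}{1}\cdot 2=-k\ell$, and the odd part gives $-\tfrac12\cdot 2k\ell=-k\ell$. We then exponentiate:
\[
 a=A,\qquad b=B+A^2/2,\qquad c=C+AB+A^3/6.
\]
Finally we rewrite everything in terms of $k\ell$ and $n=k+\ell$. The claimed forms, being symmetric in $k\leftrightarrow\ell$, are a useful consistency check.

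The main obstacle is bookkeeping in the third-order coefficient: getting $\alpha_3(m)$ and the $s^3$ coefficient of $g_N$ exactly right, and simplifying the resulting power sums. As a safeguard, we note that both sides of the expansion are polynomials in $(k,\ell)$ of bounded degree, at most $6$, with the structure visible above. So after deriving them symbolically we can verify the final formulas against direct expansions for a handful of small $(k,\ell)$, in the same interpolation spirit the paper uses. Alternatively, interpolation at enough points alone proves the identity once the polynomiality and degree bound are established from the logarithmic argument.
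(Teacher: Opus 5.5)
Your route is the paper's: the same splitting of $\qbinom{2n}{2k}$ into ${n\brack k}_{q^2}$ times an odd-factor quotient, termwise logarithmic expansion, power sums, and exponentiation via $a=A$, $b=B+A^2/2$, $c=C+AB+A^3/6$. (The logarithm of your $\prod_{i=1}^{k}(1-q^{2\ell+2i-1})/(1-q^{2i-1})$ is the paper's $F_n-F_k-F_\ell$.) The lemma is nothing but this computation, so the explicit coefficients matter, and the two higher-order ones you write down are wrong.

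For odd $m$ one has $\frac{1+(1-t)^m}{2}=1-\frac m2t+\frac{m(m-1)}4t^2-\cdots$. Hence
\[
\alpha_2(m)=\frac{m(m-1)}4-\frac12\Bigl(\frac m2\Bigr)^2=\frac{m(m-2)}8 ,
\]
so the $m^2$ terms do not cancel; also $\alpha_3(m)=\frac{m(3m-4)}{24}$. With your $\alpha_2=-m/4$, the odd part would contribute $-\frac{k\ell}2t^2$ instead of $\frac{k\ell(n-1)}2t^2$. Even with a correct even part you would then get $b_{n,k}=\frac{k\ell(12k\ell+n-5)}6$. For $n=2$, $k=1$ this gives $\frac32$, whereas $\qbinom{4}{2}\big|_{q=-1+t}=2-4t+5t^2-3t^3+t^4$ forces $b_{2,1}=\frac52$.

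For the even factors, with $x=1-s$,
\[
\log\frac{1-x^N}{N(1-x)}=-\frac{N-1}{2}s+\frac{(N-1)(N-5)}{24}s^2+\frac{(N-1)(N-3)}{24}s^3+O(s^4).
\]
The value $\frac{N^2-1}{24}$ you recalled is the coefficient of $u^2$, with a plus sign, in the exponential variable $x=e^{-u}$, not the coefficient of $s^2$.

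With the corrected values your plan closes exactly as in the paper. The odd part gives $-k\ell t+\frac{k\ell(n-1)}2t^2+\frac{k\ell(3n-2)}6t^3$. The even part, after substituting $s=2t-t^2$, gives $-k\ell t+\frac{k\ell(n-2)}6t^2+\frac{k\ell(n-1)}6t^3$. Their sum is $-2k\ell t+\frac{k\ell(4n-5)}6t^2+\frac{k\ell(4n-3)}6t^3$, which is (3.10), and your exponentiation formulas then give (3.2)--(3.4).

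Your interpolation fallback is also legitimate. The logarithmic argument does show that $a,b,c$ are polynomials in $(k,\ell)$ of total degree at most $2,4,6$. As a proof, though, it needs a node set that determines polynomials of total degree $\le 6$, for instance all $k,\ell\ge0$ with $k+\ell\le 6$; a handful of spot checks is only a safeguard.
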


\begin{proof}
Separate the even and odd factors in the factorials:
\begin{align}
 \qbinom{2n}{2k}
 ={n\brack k}_{q^2}
 \frac{\prod_{j=1}^{n}(1-q^{2j-1})}
 {\prod_{j=1}^{k}(1-q^{2j-1})
  \prod_{j=1}^{\ell}(1-q^{2j-1})}.
 \tag{3.5}\label{eq:evenodd}
\end{align}
For $Q=1+h$, direct expansion gives
\begin{align}
 \log\frac{1-Q^m}{-mh}
 ={}&\frac{m-1}{2}h
 +\frac{(m-1)(m-5)}{24}h^2 \notag\\
 &+\frac{(m-1)(3-m)}{24}h^3+O(h^4).
 \tag{3.6}\label{eq:logeven}
\end{align}
Summing \eqref{eq:logeven} over the numerator and denominator factors of
${n\brack k}_Q$, and then using $Q=q^2=1-2t+t^2$, gives
\begin{align}
 \log\frac{{n\brack k}_{q^2}}{\binom nk}
 =-k\ell t+\frac{k\ell(n-2)}6t^2
 +\frac{k\ell(n-1)}6t^3+O(t^4).
 \tag{3.7}\label{eq:evenpart}
\end{align}
For odd $r$, one similarly obtains
\begin{align}
 \log\frac{1-(-1+t)^r}{2}
 =-\frac r2t+\frac{r(r-2)}8t^2
 +\frac{r(3r-4)}{24}t^3+O(t^4).
 \tag{3.8}\label{eq:logodd}
\end{align}
Put
\[
 F_m(t)=\sum_{j=1}^{m}
 \log\frac{1-(-1+t)^{2j-1}}{2}.
\]
The logarithm of the odd-factor quotient in \eqref{eq:evenodd} is
$F_n(t)-F_k(t)-F_\ell(t)$.  Substituting \eqref{eq:logodd}, using
$n=k+\ell$, and evaluating the resulting power sums gives
\begin{align*}
 [t](F_n-F_k-F_\ell)
 &=-\frac12(n^2-k^2-\ell^2)=-k\ell,\\
 [t^2](F_n-F_k-F_\ell)
 &=\frac{k\ell(n-1)}2,\\
 [t^3](F_n-F_k-F_\ell)
 &=\frac{k\ell(3n-2)}6.
\end{align*}
Here one uses
\[
 \sum_{j=1}^{m}(2j-1)=m^2,
\]
together with the standard formulas for $\sum_{j=1}^{m}j$ and
$\sum_{j=1}^{m}j^2$.  Therefore the odd-factor quotient has logarithm
\begin{align}
 -k\ell t+\frac{k\ell(n-1)}2t^2
 +\frac{k\ell(3n-2)}6t^3+O(t^4).
 \tag{3.9}\label{eq:oddpart}
\end{align}
Adding \eqref{eq:evenpart} and \eqref{eq:oddpart}, we find
\begin{align}
 \log\left(
 \frac{\qbinom{2n}{2k}|_{q=-1+t}}{\binom nk}
 \right)
 =-2k\ell t+\frac{k\ell(4n-5)}6t^2
 +\frac{k\ell(4n-3)}6t^3+O(t^4).
 \tag{3.10}\label{eq:logtotal}
\end{align}
Let
\[
 L(t)=At+Bt^2+Ct^3+O(t^4).
\]
Using
\[
 e^{L(t)}
 =1+L(t)+\frac{L(t)^2}{2}+\frac{L(t)^3}{6}+O(t^4),
\]
we have
\begin{equation}
 e^{L(t)}
 =1+At+\left(B+\frac{A^2}{2}\right)t^2
 +\left(C+AB+\frac{A^3}{6}\right)t^3+O(t^4).
 \tag{$*$}\label{eq:exponential-expansion}
\end{equation}

Exponentiating \eqref{eq:logtotal} and using
\eqref{eq:exponential-expansion} gives
\eqref{eq:qbinomexp}--\eqref{eq:c}.
\end{proof}

\section{Finite differences and discovery of the corrections}

We now describe a systematic way to determine the correction terms.  The
recurrence first implies that each correction coefficient, regarded as a
function of $n$, has bounded polynomial degree.  Once this degree bound is
known, the coefficient is determined by finitely many initial values, which
are most conveniently organized by finite differences.

Put
\[
 d_n=2n(n-1),\qquad Q_n(t)=(-1+t)^{d_n}=(1-t)^{d_n}.
 \tag{4.1}\label{eq:Qn}
\]
We shall repeatedly use the elementary binomial identity
\[
 \sum_{k=0}^{n}(-1)^k\binom nk k^j=0
 \qquad(0\leq j<n).
 \tag{4.2}\label{eq:bindiff}
\]
This identity expresses the fact that the $n$th finite difference of a
polynomial of degree less than $n$ vanishes.

More precisely, let $\Delta$ denote the forward-difference operator on a
sequence $(u_n)_{n\geq0}$:
\[
 \Delta u_n=u_{n+1}-u_n,
 \qquad
 \Delta^{r+1}u_n=\Delta(\Delta^r u_n).
\]
Repeated application of the definition gives
\[
 \Delta^r u_0=
 \sum_{j=0}^{r}(-1)^{r-j}\binom rj u_j.
 \tag{4.3}\label{eq:finite-degree}
\]
If $\Delta^r u_0=0$ for every $r>d$, Newton's interpolation formula gives
\[
 u_n=\sum_{r=0}^{d}\binom nr\Delta^r u_0.
 \tag{4.4}\label{eq:newton}
\]
Consequently, $u_n$ is a polynomial in $n$ of degree at most $d$.
Conversely, every polynomial sequence of degree at most $d$ has vanishing
finite differences of order greater than $d$.

This provides the guiding strategy below: we use the recurrence and
\eqref{eq:bindiff} to prove a degree bound for each unknown correction,
compute enough initial values, and then recover the correction for all $n$
from \eqref{eq:newton}.

\subsection{Recovering the quadratic correction}

Define $U_n$ by
\[
 E_{2n}(-1+t)=Q_n(t)+U_nt^2+O(t^3).
 \tag{4.5}\label{eq:Un-def}
\]
This definition is possible because the Andrews--Foata congruence
\eqref{eq:AF} says that the constant and linear coefficients agree with
those of $Q_n(t)$.

Let $\ell=n-k$.  Multiplying the expansion in
Lemma~\ref{lem:qbinom-expansion} by
\[
 Q_\ell(t)=1-2\ell(\ell-1)t
 +\ell(\ell-1)\bigl(2\ell(\ell-1)-1\bigr)t^2+O(t^3)
\]
gives
\begin{align}
 \qbinom{2n}{2k}\bigg|_{q=-1+t}Q_\ell(t)
 =\binom nk\bigl(1+A_{n,k}t+B_{n,k}t^2\bigr)+O(t^3),
 \tag{4.6}\label{eq:AB-expansion}
\end{align}
where
\[
 A_{n,k}=-2\ell(n-1)
\]
and
\begin{align}
 B_{n,k}={}&
 \frac{k\ell(12k\ell+4n-5)}6
 +4k\ell^2(\ell-1)\notag\\
 &+\ell(\ell-1)\bigl(2\ell(\ell-1)-1\bigr).
 \tag{4.7}\label{eq:Bnk}
\end{align}
For fixed $n$, $B_{n,k}$ is a polynomial in $k$ of degree at most four,
and
\[
 B_{n,0}=\binom{d_n}{2}=[t^2]Q_n(t).
\]

Substitution of \eqref{eq:Un-def} into \eqref{eq:recurrence}, followed by
comparison of the coefficients of $t^2$, yields
\begin{align}
 \binom{d_n}{2}+U_n
 =\sum_{k=1}^{n}(-1)^{k-1}\binom nk
 \bigl(B_{n,k}+U_{n-k}\bigr).
 \tag{4.8}\label{eq:Un-recurrence}
\end{align}
For $n\geq5$, identity \eqref{eq:bindiff} annihilates the polynomial
$B_{n,k}$.  Equation \eqref{eq:Un-recurrence} therefore becomes
\[
 \sum_{k=0}^{n}(-1)^k\binom nk U_{n-k}=0,
\]
or, equivalently,
\[
 \Delta^nU_0=0\qquad(n\geq5).
 \tag{4.9}\label{eq:Un-degree}
\]
Thus $U_n$ is a polynomial in $n$ of degree at most four.

The recurrence gives the five initial values
\[
 U_0,U_1,U_2,U_3,U_4=0,0,-1,-3,-6.
\]
Their difference table is
\[
\begin{array}{c|rrrrr}
n&0&1&2&3&4\\ \hline
U_n&0&0&-1&-3&-6\\
\Delta U_n&0&-1&-2&-3\\
\Delta^2U_n&-1&-1&-1\\
\Delta^3U_n&0&0\\
\Delta^4U_n&0
\end{array}
\]
Newton's formula \eqref{eq:newton} now gives
\[
 U_n=-\binom n2.
 \tag{4.10}\label{eq:Un}
\]
In particular, the analytic recurrence recovers Liu's cubic congruence and
also explains how the coefficient $\binom n2$ can be discovered.

\subsection{Recovering the cubic correction}

Having determined $U_n$, define $V_n$ by
\[
 E_{2n}(-1+t)=Q_n(t)-\binom n2t^2+V_nt^3+O(t^4).
 \tag{4.11}\label{eq:Vn-def}
\]
For fixed $n$, expand
\begin{align}
 &\frac{1}{\binom nk}
 \qbinom{2n}{2k}\bigg|_{q=-1+t}
 \left(Q_{n-k}(t)-\binom{n-k}{2}t^2\right)
 \notag\\
 &\hspace{20mm}
 =1+A_{n,k}t+B'_{n,k}t^2+D_{n,k}t^3+O(t^4).
 \tag{4.12}\label{eq:Dnk-expansion}
\end{align}
Lemma~\ref{lem:qbinom-expansion} shows that $D_{n,k}$ is a polynomial in
$k$ of degree at most six.  Its explicit expression is unnecessary; the
degree bound is the essential point.  Moreover,
\[
 D_{n,0}=[t^3]Q_n(t)=-\binom{d_n}{3}.
\]
Substitution of \eqref{eq:Vn-def} into \eqref{eq:recurrence} and comparison
of the coefficients of $t^3$ show, exactly as above, that
\[
 \Delta^nV_0=0\qquad(n\geq7).
 \tag{4.13}\label{eq:Vn-degree}
\]
Hence $V_n$ is a polynomial in $n$ of degree at most six.

The recurrence gives
\[
 V_0,V_1,V_2,V_3,V_4,V_5,V_6
 =0,0,1,27,126,370,855.
\]
The corresponding difference table is
\[
\begin{array}{c|rrrrrrr}
n&0&1&2&3&4&5&6\\ \hline
V_n&0&0&1&27&126&370&855\\
\Delta V_n&0&1&26&99&244&485\\
\Delta^2V_n&1&25&73&145&241\\
\Delta^3V_n&24&48&72&96\\
\Delta^4V_n&24&24&24\\
\Delta^5V_n&0&0\\
\Delta^6V_n&0
\end{array}
\]
Newton's formula therefore yields
\begin{align}
 V_n
 &=\binom n2+24\binom n3+24\binom n4\notag\\
 &=\binom n2(2n^2-2n-3).
 \tag{4.14}\label{eq:Vn}
\end{align}
Together, \eqref{eq:Vn-def} and \eqref{eq:Vn} prove
Theorem~\ref{thm:main}.

\begin{remark}
The same discovery principle works at every fixed order.  The coefficient
of $t^r$ in a normalized summand of \eqref{eq:recurrence} is a polynomial
in $k$ of bounded degree.  The binomial finite-difference identity then
proves a degree bound for the unknown coefficient as a polynomial in $n$.
After computing the necessary initial values, Newton interpolation recovers
the coefficient explicitly.
\end{remark}

\section{A general recurrence modulo \texorpdfstring{$(1+q)^K$}{(1+q)K}}

The preceding calculation can be organized as a recurrence that computes
$E_{2n}(q)$ modulo any prescribed power of $1+q$.  Once again put
\[
 t=1+q,\qquad q=-1+t,
\]
and fix an integer $K\geq1$.  Write
\[
 E_{2n}(-1+t)\equiv\sum_{r=0}^{K-1}e_{n,r}t^r\pmod{t^K}
 \tag{5.1}\label{eq:e-coefficients}
\]
and
\[
 {N\brack M}_{-1+t}
 \equiv\sum_{r=0}^{K-1}b_{N,M,r}t^r\pmod{t^K}.
 \tag{5.2}\label{eq:b-coefficients}
\]

\begin{proposition}\label{prop:general-recurrence}
The coefficients in \eqref{eq:b-coefficients} satisfy
\begin{align}
 b_{N,M,r}={}&b_{N-1,M,r}
 +\sum_{s=0}^{r}(-1)^{N-M-s}
 \binom{N-M}{s}b_{N-1,M-1,r-s},
 \tag{5.3}\label{eq:b-recurrence}
\end{align}
where $\binom ds=0$ for $s>d$.  The coefficients of the $q$-secant
polynomials then satisfy
\begin{align}
 e_{n,r}
 =\sum_{j=1}^{n}(-1)^{j-1}
 \sum_{s=0}^{r}b_{2n,2j,s}e_{n-j,r-s}.
 \tag{5.4}\label{eq:e-recurrence}
\end{align}
The initial conditions are
\[
 b_{N,0,0}=b_{N,N,0}=1,
 \qquad
 b_{N,0,r}=b_{N,N,r}=0\quad(r\geq1),
 \tag{5.5}\label{eq:b-initial}
\]
together with $b_{N,M,r}=0$ if $M<0$ or $M>N$, and
\[
 e_{0,0}=1,\qquad e_{0,r}=0\quad(r\geq1).
 \tag{5.6}\label{eq:e-initial}
\]
Consequently, \eqref{eq:b-recurrence} and \eqref{eq:e-recurrence} compute
$E_{2n}(q)$ modulo $(1+q)^K$ using integer arithmetic only.
\end{proposition}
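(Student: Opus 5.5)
The proof uses the $q$-Pascal rule in the form
\[
 {N\brack M}_q={N-1\brack M}_q+q^{N-M}{N-1\brack M-1}_q,
\]
valid for $1\le M\le N-1$. With $M<N$ this rule follows directly from the definition of the Gaussian coefficients, or equivalently from the product formula. Substitute $q=-1+t$ and expand the power:
\[
 q^{N-M}=(-1+t)^{N-M}=\sum_{s=0}^{N-M}\binom{N-M}{s}(-1)^{N-M-s}t^s .
\]
Taking the coefficient of $t^r$ in the product $q^{N-M}{N-1\brack M-1}_{-1+t}$ then gives the Cauchy product
\[
 \sum_{s=0}^{r}(-1)^{N-M-s}\binom{N-M}{s}\,b_{N-1,M-1,r-s}.
\]
The convention $\binom ds=0$ for $s>d$ takes care of the terms with $s>N-M$. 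Adding the coefficient $b_{N-1,M,r}$ from the first summand yields \eqref{eq:b-recurrence}.

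The boundary cases are immediate. Since ${N\brack 0}_q={N\brack N}_q=1$, we get \eqref{eq:b-initial}. The vanishing convention for $M<0$ or $M>N$ matches the convention ${N\brack M}_q=0$. One point needs checking: if \eqref{eq:b-recurrence} is also applied at $M=N$, it must be consistent with these conventions. In that case $b_{N-1,N,r}=0$ and $(-1+t)^0=1$, so the right-hand side reduces to $b_{N-1,N-1,r}$, which is again correct. The rule is therefore valid for all $0\le M\le N$ once the conventions are in place.

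For \eqref{eq:e-recurrence}, start from Lemma~\ref{lem:recurrence}, substitute $q=-1+t$, and reduce modulo $t^K$. The summand ${2n\brack 2j}_q E_{2n-2j}(q)$ has $t^r$-coefficient $\sum_{s=0}^{r}b_{2n,2j,s}e_{n-j,r-s}$, again by the Cauchy product. Summing over $j$ with the signs $(-1)^{j-1}$ gives \eqref{eq:e-recurrence}. The initial values \eqref{eq:e-initial} come from $E_0=1$.

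Finally, the integrality claim. Every quantity on the right-hand sides is an integer: the binomial coefficients, the signs, and, by induction on $N$, the coefficients $b_{N,M,r}$. Inducting on $n$ shows the same for the $e_{n,r}$. Each $e_{n,r}$ with $r<K$ depends only on the $e_{m,r'}$ with $m<n$ and $r'\le r$, so the recursion is well founded and computes $E_{2n}(q)$ modulo $(1+q)^K$ using integer arithmetic only.

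This argument has no real obstacle. The only points needing care are checking the convention at the boundary $M=N$ and truncating the convolutions correctly at $s\le r$.
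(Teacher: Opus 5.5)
Your proof is correct and follows the paper's route: substitute $q=-1+t$ into the $q$-Pascal rule ${N\brack M}_q={N-1\brack M}_q+q^{N-M}{N-1\brack M-1}_q$, expand $(-1+t)^{N-M}$ binomially and take the Cauchy product, then do the same with the recurrence of Lemma~\ref{lem:recurrence}. Your extra checks of the boundary case $M=N$, of integrality by induction, and of the well-foundedness of the recursion are details the paper leaves implicit, and they are right.
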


\begin{proof}
The $q$-Pascal recurrence is
\[
 {N\brack M}_q
 ={N-1\brack M}_q+q^{N-M}{N-1\brack M-1}_q.
 \tag{5.7}\label{eq:q-pascal}
\]
Substituting $q=-1+t$ and using
\[
 (-1+t)^d=\sum_{s=0}^{d}(-1)^{d-s}\binom ds t^s
\]
in \eqref{eq:q-pascal}, comparison of the coefficients of $t^r$ gives
\eqref{eq:b-recurrence}.  Equation \eqref{eq:e-recurrence} follows by
substituting \eqref{eq:e-coefficients} and \eqref{eq:b-coefficients} into
the $q$-secant recurrence \eqref{eq:recurrence} and comparing coefficients
of $t^r$.
\end{proof}

There is also a compact polynomial form of the same algorithm.  Work in the
truncated ring
\[
 R_K=\mathbb Z[t]/(t^K)
\]
and put
\[
 B_{N,M}^{[K]}(t)={N\brack M}_{-1+t}\pmod{t^K}.
\]
Then
\begin{align}
 B_{N,M}^{[K]}(t)
 \equiv{}&B_{N-1,M}^{[K]}(t)
 +(-1+t)^{N-M}B_{N-1,M-1}^{[K]}(t)
 \pmod{t^K},
 \tag{5.8}\label{eq:B-polynomial-recurrence}
\end{align}
with
\[
 B_{N,0}^{[K]}(t)=B_{N,N}^{[K]}(t)=1.
\]
Starting with $S_0^{[K]}(t)=1$, define
\begin{align}
 S_n^{[K]}(t)
 \equiv\sum_{j=1}^{n}(-1)^{j-1}
 B_{2n,2j}^{[K]}(t)S_{n-j}^{[K]}(t)
 \pmod{t^K}.
 \tag{5.9}\label{eq:S-polynomial-recurrence}
\end{align}
It follows immediately that
\[
 E_{2n}(q)\equiv S_n^{[K]}(1+q)\pmod{(1+q)^K}.
 \tag{5.10}\label{eq:S-result}
\]
Thus \eqref{eq:B-polynomial-recurrence}--\eqref{eq:S-result} give a direct
algorithm at arbitrary order, even when no closed formula for the
coefficients is available.

\begin{example}[The case $K=5$]\label{ex:K5}
Apply \eqref{eq:B-polynomial-recurrence} and
\eqref{eq:S-polynomial-recurrence} in the ring
$R_5=\mathbb Z[t]/(t^5)$.  Writing the result relative to the leading
monomial $(-1+t)^{2n(n-1)}$, the recurrence gives successively
\[
 [t^2]\bigl(S_n^{[5]}(t)-(-1+t)^{2n(n-1)}\bigr)=-\binom n2,
\]
\[
 [t^3]\bigl(S_n^{[5]}(t)-(-1+t)^{2n(n-1)}\bigr)
 =\binom n2(2n^2-2n-3),
\]
and
\begin{align}
 [t^4]\bigl(S_n^{[5]}(t)-(-1+t)^{2n(n-1)}\bigr)
 =-\frac{n(n-1)(n-2)}{24}
 (24n^3-65n-65).
 \tag{5.11}\label{eq:K5-fourth-coefficient}
\end{align}
Here are more details on the computation of the last coefficient.  Write
\[
 S_n^{[5]}(t)=\sum_{r=0}^{4}e_{n,r}t^r
 \qquad\text{in }R_5
\]
and set
\[
 W_n=[t^4]\bigl(S_n^{[5]}(t)-(-1+t)^{d_n}\bigr),
 \qquad d_n=2n(n-1).
 \tag{5.12}\label{eq:Wn-def}
\]
Since $d_n$ is even,
\[
 [t^4](-1+t)^{d_n}=[t^4](1-t)^{d_n}=\binom{d_n}{4},
\]
and hence
\[
 W_n=e_{n,4}-\binom{d_n}{4}.
 \tag{5.13}\label{eq:Wn-from-e}
\]
The coefficients $e_{n,4}$ are obtained directly from
\eqref{eq:e-recurrence}:
\[
 e_{n,4}=\sum_{j=1}^{n}(-1)^{j-1}
 \sum_{s=0}^{4}b_{2n,2j,s}e_{n-j,4-s},
 \tag{5.14}\label{eq:en4-recurrence}
\]
where the $b_{N,M,s}$ are computed beforehand from
\eqref{eq:b-recurrence}.  This gives the following table.
\[
\begin{array}{c|r|r|r}
n&\binom{d_n}{4}&e_{n,4}&W_n\\ \hline
0&0&0&0\\
1&0&0&0\\
2&1&1&0\\
3&495&398&-97\\
4&10626&9415&-1211\\
5&91390&84865&-6525\\
6&487635&463990&-23645\\
7&1929501&1862021&-67480\\
8&6210820&6046978&-163842
\end{array}
\]

The same degree argument as in Section~4 shows that the normalized
coefficient of $t^4$ in a summand of \eqref{eq:recurrence} is a polynomial
in the summation index of degree at most eight.  Consequently,
\[
 \Delta^nW_0=0\qquad(n\geq9),
\]
so $W_n$ is a polynomial in $n$ of degree at most eight.  Its difference
table is
\[
\begin{array}{c|rrrrrrrrr}
n&0&1&2&3&4&5&6&7&8\\ \hline
W_n&0&0&0&-97&-1211&-6525&-23645&-67480&-163842\\
\Delta W_n&0&0&-97&-1114&-5314&-17120&-43835&-96362\\
\Delta^2W_n&0&-97&-1017&-4200&-11806&-26715&-52527\\
\Delta^3W_n&-97&-920&-3183&-7606&-14909&-25812\\
\Delta^4W_n&-823&-2263&-4423&-7303&-10903\\
\Delta^5W_n&-1440&-2160&-2880&-3600\\
\Delta^6W_n&-720&-720&-720\\
\Delta^7W_n&0&0\\
\Delta^8W_n&0
\end{array}
\]
The vanishing of the seventh and eighth differences shows that the actual
degree is at most six.  Applying Newton's formula with base point $n=1$
gives
\begin{align}
 W_n={}&-97\binom{n-1}{2}-920\binom{n-1}{3}
 -2263\binom{n-1}{4}\notag\\
 &\hspace{35mm}
 -2160\binom{n-1}{5}-720\binom{n-1}{6}.
 \tag{5.15}\label{eq:K5-newton}
\end{align}
Indeed, the coefficients in \eqref{eq:K5-newton} are respectively
$\Delta^2W_1,\ldots,\Delta^6W_1$; the constant and linear Newton
coefficients vanish because $W_1=W_2=0$.  Simplifying
\eqref{eq:K5-newton} yields
\[
 W_n=-\frac{n(n-1)(n-2)(24n^3-65n-65)}{24},
\]
which proves \eqref{eq:K5-fourth-coefficient}.
Consequently,
\begin{align}
 E_{2n}(q)\equiv{}&q^{2n(n-1)}-\binom n2(1+q)^2\notag\\
 &+\binom n2(2n^2-2n-3)(1+q)^3\notag\\
 &-\frac{n(n-1)(n-2)(24n^3-65n-65)}{24}(1+q)^4
 \pmod{(1+q)^5}.
 \tag{5.16}\label{eq:mod-fifth}
\end{align}
This illustrates that the general recurrence determines the next congruence
even when the required expansion of the Gaussian coefficient is not written
out explicitly.
\end{example}

\section{Generalized \texorpdfstring{$q$}{q}-secant numbers}
\label{sec:generalized}

Sagan and Zhang~\cite{SaganZhang} defined
\[
 E_{N\mid p}(q)=
 \sum_{\substack{\pi\in\mathfrak S_N\\
 \operatorname{Des}(\pi)=\{p,2p,3p,\ldots\}}}
 q^{\operatorname{inv}(\pi)}.
\]
The subsequence
\[
 \mathcal E_n^{(p)}(q)=E_{pn\mid p}(q)
\]
is the natural generalized $q$-secant sequence; for $p=2$ it is precisely
$E_{2n}(q)$.  Descent-set inclusion--exclusion gives
\begin{align}
 \sum_{n\geq0}\mathcal E_n^{(p)}(q)
 \frac{x^{pn}}{(q;q)_{pn}}
 =\left(\sum_{n\geq0}(-1)^n
 \frac{x^{pn}}{(q;q)_{pn}}\right)^{-1},
 \tag{6.1}\label{eq:generalized-genfun}
\end{align}
and hence
\begin{align}
 \mathcal E_n^{(p)}(q)
 =\sum_{j=1}^{n}(-1)^{j-1}
 {pn\brack pj}_q\mathcal E_{n-j}^{(p)}(q).
 \tag{6.2}\label{eq:generalized-recurrence}
\end{align}
The generalized $q$-Euler numbers considered here are closely related to
those introduced by Guo and Zeng~\cite{GuoZeng}.  Their convention is
\[
 \sum_{n\geq0}E_{pn}^{(p)}(q)\frac{x^{pn}}{(q;q)_{pn}}
 =
 \left(
 \sum_{n\geq0}\frac{x^{pn}}{(q;q)_{pn}}
 \right)^{-1},
\]
and comparison with \eqref{eq:generalized-genfun} gives
\begin{align}
 \mathcal E_n^{(p)}(q)=E_{pn\mid p}(q)
 =(-1)^nE_{pn}^{(p)}(q).
 \tag{6.2a}\label{eq:GuoZeng-sign}
\end{align}
Guo and Zeng proved a root-of-unity congruence for
$E_{kn}^{(k)}(q)$ with arbitrary block size $k$.  In particular, if
$\xi$ is a primitive $p$-th root of unity, their result with $k=p$,
$d=1$, and the lower index equal to zero gives
\[
 E_{pn}^{(p)}(\xi)=(-1)^n.
\]
It follows from \eqref{eq:GuoZeng-sign} that
\begin{align}
 E_{pn\mid p}(q)\equiv1\pmod{\Phi_p(q)}.
 \tag{6.2b}\label{eq:GuoZeng-first-order}
\end{align}
Thus the congruences established below may be viewed as higher
cyclotomic refinements of the $d=1$ case of the Guo--Zeng theorem:
instead of evaluating at a primitive $p$-th root of unity, we determine
the expansion through the third and fourth powers of its cyclotomic
polynomial.

Assume that $p$ is prime and put
\[
 \Phi=\Phi_p(q)=[p]_q,\qquad Q=q^{p^2}.
\]
The role of $1+q=\Phi_2(q)$ is now played by $\Phi_p(q)$.  The following
block form of the $q$-Ljunggren congruence is due, for $p\geq5$, to
Straub~\cite{Straub}; see also Zudilin~\cite{Zudilin}.  The cases $p=2,3$
follow by direct calculation:
\begin{align}
 {pn\brack pj}_q
 \equiv {n\brack j}_{Q}
 +\frac{j(n-j)}2\binom nj A_p(q)\Phi^2
 \pmod{\Phi^3},
 \tag{6.3}\label{eq:block-ljunggren}
\end{align}
where, modulo $\Phi$,
\begin{align}
 A_p(q)
 \equiv-\frac{p^2-1}{12}(q-1)^2
 \equiv
 \frac{{2p\brack p}_q-1-q^{p^2}}{\Phi^2}.
 \tag{6.4}\label{eq:Ap}
\end{align}

\begin{theorem}\label{thm:generalized-cubic}
For every prime $p$ and every $n\geq0$,
\begin{align}
 E_{pn\mid p}(q)
 \equiv q^{p^2\binom n2}
 +\binom n2A_p(q)[p]_q^2
 \pmod{[p]_q^3}.
 \tag{6.5}\label{eq:generalized-cubic}
\end{align}
\end{theorem}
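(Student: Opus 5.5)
We argue by strong induction on $n$, using the recurrence \eqref{eq:generalized-recurrence} together with the block $q$-Ljunggren congruence \eqref{eq:block-ljunggren}. Throughout we work modulo $\Phi^3$, where $\Phi=[p]_q$ and $Q=q^{p^2}$. The cases $n=0,1$ are immediate: $E_{0\mid p}=1$, and $E_{p\mid p}(q)=1=Q^0$ because the only permutation of $\{1,\dots,p\}$ with empty descent set is the identity. For the induction step, put $\varepsilon_m=\binom m2 A_p(q)\Phi^2$ and assume $\mathcal E^{(p)}_m\equiv Q^{\binom m2}+\varepsilon_m$ for all $m<n$.

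\textbf{Substitution.} Insert the inductive hypothesis and \eqref{eq:block-ljunggren} into \eqref{eq:generalized-recurrence}. Each correction term carries a factor $\Phi^2$, so any product of two corrections vanishes modulo $\Phi^3$. Also, $Q\equiv1\pmod\Phi$ because $p\mid p^2$, so in every term multiplied by $\Phi^2$ we may replace ${n\brack j}_Q$ by $\binom nj$ and $Q^{\binom{n-j}2}$ by $1$. This yields
\[
 \mathcal E^{(p)}_n\equiv \sum_{j=1}^n(-1)^{j-1}{n\brack j}_Q Q^{\binom{n-j}2}
 +A_p\Phi^2\sum_{j=1}^n(-1)^{j-1}\binom nj\Bigl(\tfrac{j(n-j)}2+\binom{n-j}2\Bigr)\pmod{\Phi^3}.
\]

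\textbf{Main term.} The first sum is evaluated exactly by the $q$-binomial theorem $\sum_{j=0}^n(-1)^jQ^{\binom j2}{n\brack j}_Q z^j=(z;Q)_n$. Reindexing by $i=n-j$ and using $\binom{n-i}{2}+\binom i2+i(n-i)=\binom n2$ together with the symmetry ${n\brack j}_Q={n\brack n-i}_Q$, we get
\[
 \sum_{j=0}^n(-1)^{j}{n\brack j}_QQ^{\binom{n-j}2}=(-1)^nQ^{\binom n2}\,(Q^{1-n};Q)_n=0\qquad(n\ge1),
\]
since the factor $1-Q^{1-n}Q^{n-1}$ vanishes. Hence the first sum equals $Q^{\binom n2}$. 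This is the $q$-analogue of the secant mechanism, and it is a genuine identity, not merely a congruence, which is what makes the leading term exact.

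\textbf{Correction sum.} Note that $\tfrac{j(n-j)}2+\binom{n-j}2=\tfrac{(n-j)(n-1)}2$. By \eqref{eq:bindiff}, for $n\ge2$ we have $\sum_{j=0}^n(-1)^j\binom nj(n-j)=0$. The $j=0$ term of this sum is $n$, so
\[
 \sum_{j\ge1}(-1)^{j-1}\binom nj(n-j)=n,
\]
and the correction sum equals $\tfrac{n(n-1)}2=\binom n2$, as required. For $n=1$ the correction sum is $0=\binom12$ directly.

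\textbf{Main obstacle.} The bookkeeping above is routine. The delicate point is to justify \eqref{eq:block-ljunggren} uniformly in $p$, including the small primes $p=2,3$, and to confirm that the coefficient $A_p(q)$, which is defined only modulo $\Phi$, enters only multiplied by $\Phi^2$, so that this ambiguity is harmless. I would check $p=2$ against Theorem~\ref{thm:main}. There $A_2\equiv-\tfrac14(q-1)^2\equiv-1\pmod{1+q}$ and $Q=q^4$, which recovers Liu's term $-\binom n2(1+q)^2$; this serves as a consistency check.
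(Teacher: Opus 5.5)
Your proposal is correct and takes essentially the same route as the paper: induction through the recurrence \eqref{eq:generalized-recurrence} and the block $q$-Ljunggren congruence \eqref{eq:block-ljunggren}, with ${n\brack j}_Q\equiv\binom nj$ and $Q\equiv1$ modulo $\Phi$ in the $\Phi^2$-terms, the finite $q$-binomial theorem for the main term, and the simplification $\binom{n-j}{2}+\frac{j(n-j)}2=\frac{(n-1)(n-j)}2$ followed by the finite-difference identity \eqref{eq:bindiff}. The only difference is cosmetic: the paper also lists $n=2$ as a base case, since it is just the defining relation \eqref{eq:Ap} for $A_p$, whereas your induction step handles $n=2$ directly through \eqref{eq:block-ljunggren}.
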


\begin{proof}
We use induction on $n$.  The assertion is immediate for $n=0,1$, and its
case $n=2$ is \eqref{eq:Ap}.  Suppose it holds below $n$.  Substitute the
induction hypothesis and \eqref{eq:block-ljunggren} into
\eqref{eq:generalized-recurrence}.  The terms not containing $\Phi^2$ sum
to
\[
 \sum_{j=1}^{n}(-1)^{j-1}{n\brack j}_{Q}
 Q^{\binom{n-j}{2}}=Q^{\binom n2},
\]
by the finite $q$-binomial theorem.  Since $Q\equiv1\pmod\Phi$ and
${n\brack j}_{Q}\equiv\binom nj\pmod\Phi$, the coefficient multiplying
$A_p(q)\Phi^2$ is
\begin{align*}
 &\sum_{j=1}^{n}(-1)^{j-1}\binom nj
 \left\{\binom{n-j}{2}+\frac{j(n-j)}2\right\}\\
 &\qquad=\frac{n-1}{2}
 \sum_{j=1}^{n}(-1)^{j-1}\binom nj(n-j)
 =\binom n2.
\end{align*}
This proves \eqref{eq:generalized-cubic}.
\end{proof}

For $p=2$, one has $A_2(q)=-1$ modulo $1+q$, so
Theorem~\ref{thm:generalized-cubic} reduces to Liu's congruence
\eqref{eq:Liu}.  For example,
\[
 A_3(q)=2q\pmod{[3]_q},\qquad
 A_5(q)=-2(1-q)^2\pmod{[5]_q}.
\]

\subsection{The next cyclotomic correction}

We now continue the preceding argument modulo $\Phi^4$.  All coefficients
in this subsection are understood in the quotient ring
$\mathbb Q[q]/(\Phi)$.  Choose for $A_p(q)$ in \eqref{eq:Ap} its remainder
of degree less than $p-1$, and define
\begin{align}
 T_p(q)=\operatorname{rem}_{\Phi}
 \left(
 \frac{{2p\brack p}_q-1-Q-A_p(q)\Phi^2}{\Phi^3}
 \right)
 \tag{6.6}\label{eq:Tp}
\end{align}
and
\begin{align}
 U_p(q)=\frac p2(q-1)A_p(q)
 =-\frac{p(p^2-1)}{24}(q-1)^3.
 \tag{6.7}\label{eq:Up}
\end{align}
Thus $T_p$ is a central $q$-Wolstenholme-type quotient.

\begin{lemma}[Third-order block expansion]
\label{lem:third-order-block}
There exist $V_p(q),W_p(q)\in\mathbb Q[q]/(\Phi)$, independent of $n$ and
$j$, such that
\begin{align}
 {pn\brack pj}_q\equiv{}&{n\brack j}_{Q}
 +h_{n,j}A_p(q)\Phi^2\notag\\
 &+h_{n,j}\bigl(U_p(q)j(n-j)+V_p(q)n+W_p(q)\bigr)\Phi^3
 \pmod{\Phi^4},
 \tag{6.8}\label{eq:third-order-block}
\end{align}
where
\[
 h_{n,j}=\frac{j(n-j)}2\binom nj.
\]
Moreover,
\begin{align}
 U_p(q)+2V_p(q)+W_p(q)=T_p(q).
 \tag{6.9}\label{eq:UVW-T}
\end{align}
\end{lemma}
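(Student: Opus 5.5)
We prove \eqref{eq:third-order-block} through a multiplicative factorization of ${pn\brack pj}_q$ by residue classes modulo $p$. Write
\[
 {pn\brack pj}_q={n\brack j}_{Q_0}\cdot R_{n,j}(q),\qquad Q_0=q^{p},
\]
where ${n\brack j}_{q^p}$ collects the factors $1-q^{pi}$, and
\[
 R_{n,j}(q)=\frac{G_n(q)}{G_j(q)\,G_{n-j}(q)},\qquad
 G_m(q)=\prod_{i=0}^{m-1}\prod_{r=1}^{p-1}\bigl(1-q^{pi+r}\bigr)
\]
collects the non-multiples of $p$. Each block $\prod_{r=1}^{p-1}(1-q^{pi+r})$ equals $\prod_r(1-q^r)$ times a correction that is a polynomial in $i$ in the $\Phi$-adic sense. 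So $\log R_{n,j}$ is a sum over blocks $i$ of quantities polynomial in $i$ modulo $\Phi^4$. The even/odd separation in Lemma~\ref{lem:qbinom-expansion} did the same thing for $p=2$. Similarly, ${n\brack j}_{q^p}$ relative to ${n\brack j}_{q^{p^2}}$ gives $\Phi$-adic corrections.

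The structural step is to show that, modulo $\Phi^4$, the ratio ${pn\brack pj}_q/{n\brack j}_Q$ has logarithm of the form
\[
 j(n-j)\bigl(\alpha\,\Phi^2+(\beta j(n-j)+\gamma n+\delta)\Phi^3\bigr).
\]
Here $\alpha,\beta,\gamma,\delta\in\mathbb Q[q]/(\Phi)$ are independent of $n,j$, and there is no $\Phi$-linear term. The reason is that each log-sum has the form $F(n)-F(j)-F(n-j)$ with $F$ a polynomial in $m$ of degree at most $4$ and $F(0)=0$. Every such difference is divisible by $j(n-j)$, and the quotient is a polynomial symmetric under $j\leftrightarrow n-j$, hence a polynomial in $n$ and $j(n-j)$. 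The degree bounds then come from the order in $\Phi$. The vanishing of the $\Phi^1$ coefficient and the value $\alpha=A_p/\ldots$ must agree with \eqref{eq:block-ljunggren}, which we may assume. Exponentiating, as in \eqref{eq:exponential-expansion}, gives
\[
 \frac{{pn\brack pj}_q}{{n\brack j}_Q}\equiv 1+\tfrac{j(n-j)}2A_p\Phi^2+\tfrac{j(n-j)}2(\ldots)\Phi^3 \pmod{\Phi^4}.
\]
We then multiply by ${n\brack j}_Q\equiv\binom nj\pmod\Phi$, which is enough because the correction already carries $\Phi^2$. This yields \eqref{eq:third-order-block} with $U_p,V_p,W_p$ read off.

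To identify $U_p$, it is cleaner to compare with \eqref{eq:block-ljunggren} itself. Since $\Phi\equiv0$ but $q-1$ is a unit in $\mathbb Q[q]/(\Phi)$, the coefficient of $j^2(n-j)^2$ at order $\Phi^3$ arises from the cross term of the $\Phi^2$ coefficient with the linear-in-$\Phi$ part of the non-reduced quantity. Concretely, $\log$ of the block factor at order $\Phi^2$ is $\tfrac{j(n-j)}2\widetilde A_p(q)$ for a lift $\widetilde A_p$. Rewriting $\widetilde A_p$ in reduced form produces a term $\tfrac p2(q-1)A_p\,j(n-j)$, coming from the expansion of $(q-1)^2$ against $\Phi$ through $q^{p}-1=(q-1)\Phi$. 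This is the step I expect to be the main obstacle: bookkeeping which $\Phi^3$ contributions depend on $j(n-j)$ versus $n$ versus constants. If this route is too messy, the fallback is to invoke Zudilin's expansion at roots of unity, as the introduction indicates, which gives exactly such a three-parameter form. Either way, only the shape matters for the existence claim, and $U_p$ is pinned down separately.

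Finally, \eqref{eq:UVW-T} follows by specializing \eqref{eq:third-order-block} to $n=2$, $j=1$. There $h_{2,1}=1$, ${2\brack 1}_Q=1+Q$, and $j(n-j)=1$, so
\[
 {2p\brack p}_q\equiv 1+Q+A_p\Phi^2+(U_p+2V_p+W_p)\Phi^3\pmod{\Phi^4}.
\]
Comparing with the definition \eqref{eq:Tp} of $T_p$ gives $U_p+2V_p+W_p=T_p$. This uses the fact that the normalization of $A_p$ as a reduced remainder is the same on both sides; any ambiguity in lifting $A_p$ has been absorbed into the $\Phi^3$ coefficients consistently.
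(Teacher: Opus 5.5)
There is a genuine gap, and it sits at the one coefficient the lemma fixes, namely $U_p$. You flagged this as the main obstacle, but the resolution you sketch does not work. Your step ``multiply by ${n\brack j}_Q\equiv\binom nj\pmod\Phi$, which is enough because the correction already carries $\Phi^2$'' is false modulo $\Phi^4$. A $\Phi^2$-correction must be multiplied by ${n\brack j}_Q$ modulo $\Phi^2$. Since $Q=(1+(q-1)\Phi)^p\equiv1+p(q-1)\Phi\pmod{\Phi^2}$,
\[
 {n\brack j}_Q\equiv\binom nj\Bigl(1+\frac p2(q-1)\,j(n-j)\,\Phi\Bigr)\pmod{\Phi^2}.
\]
The cross term you discard is $\binom nj\cdot\frac p2(q-1)j(n-j)\Phi\cdot\frac{j(n-j)}2A_p\Phi^2=h_{n,j}U_p\,j(n-j)\Phi^3$, which is exactly the $U_p$-term of (6.8).

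The logarithm of the ratio cannot supply this term instead. In your own $F(n)-F(j)-F(n-j)$ argument, with $F(m)=c_1m+c_2m^2+c_3m^3+c_4m^4$, one finds
\[
 \frac{F(n)-F(j)-F(n-j)}{j(n-j)}=2c_2+3c_3n+c_4\bigl(4n^2-2j(n-j)\bigr).
\]
So a $j(n-j)$-term at order $\Phi^3$ in the logarithm can only appear together with an $n^2$-term, which your target shape excludes. Hence $\beta=0$, and your route as written produces $0$ instead of $U_p$. Your proposed repair fails as well. Replacing a lift $\widetilde A_p=A_p+c\Phi$, with $c$ independent of $n,j$, by $A_p$ only creates $h_{n,j}c\,\Phi^3$. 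That term belongs to $W_p$, not to the $j(n-j)$ slot.

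A second, smaller gap: ``the degree bounds come from the order in $\Phi$'' still permits an $n^2$-term. You must show $c_4=0$, i.e.\ that the part of the order-$\Phi^3$ logarithm that is cubic in the block index vanishes modulo $\Phi$. For ${n\brack j}_{q^p}/{n\brack j}_Q$ this holds because $\log\frac{e^z-1}{z}$ has no $z^3$-term. For the factors $1-q^{pi+r}=1-q^r(q^p)^i$, let $d_3(w)$ be the $z^3$-coefficient of $\log\frac{1-we^z}{1-w}$. It satisfies $d_3(w^{-1})=-d_3(w)$, so $\sum_{r=1}^{p-1}d_3(q^r)\equiv0\pmod\Phi$ by pairing $r$ with $p-r$, using $q^{p-r}\equiv q^{-r}$.

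With these two repairs, your factorization yields a proof that needs only (6.3) and not Zudilin's theorem, which would be a genuinely different route. The paper instead specializes Zudilin's fourth-order congruence at $d=p$. There $U_p$ is read off directly from the explicit term $\frac{(j(N-j)d-N-2)(d^2-1)}{48}(q^d-1)^2$, and Zudilin's expansion of $\mathcal H_{p-1}(q)$ shows that the remaining order-$\Phi^3$ contribution is affine in $N$. Your fallback only names this argument without carrying it out. Your derivation of (6.9) from $n=2$, $j=1$ agrees with the paper's.
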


\begin{proof}
We specialize Zudilin's fourth-order congruence
\cite[Theorem~1.1]{Zudilin}.  For every integer $d>1$, it states that
\begingroup\small
\begin{align*}
 {dN\brack dj}_q\equiv{}&{N\brack j}_{q^{d^2}}\\
 &-j(N-j)\binom Nj(q^d-1)\Biggl\{
 N\mathcal H_{d-1}(q)+\frac{N(d-1)}2\\
 &\hspace{29mm}+\frac{(N+1)(d^2-1)}{24}(q^d-1)
 +\frac{\bigl(j(N-j)d-N-2\bigr)(d^2-1)}{48}
 (q^d-1)^2\Biggr\}\\
 &\hspace{29mm}\pmod{\Phi_d(q)^4},
\end{align*}
\endgroup
where
\[
 \mathcal H_{d-1}(q)=\sum_{r=1}^{d-1}\frac{q^r}{1-q^r}.
\]
This form is particularly well suited to the block coefficients in
\eqref{eq:generalized-recurrence}.

Take $d=p$ and write
\[
 \delta=q^p-1=(q-1)\Phi,\qquad
 \alpha=\frac{p^2-1}{24},\qquad
 \beta=\frac{p^2-1}{48}.
\]
Zudilin's expansion of the accompanying $q$-harmonic sum
\cite[Eq.~(12)]{Zudilin} implies
\[
 \mathcal H_{p-1}(q)+\frac{p-1}{2}
 =-\alpha\delta+O(\Phi^2).
\]
Consequently the following quotient is regular modulo $\Phi$:
\[
 R_p(q)=\operatorname{rem}_{\Phi}
 \left(
 \frac{\mathcal H_{p-1}(q)+(p-1)/2+\alpha\delta}{\Phi^2}
 \right).
\]
Substituting
\[
 \mathcal H_{p-1}(q)+\frac{p-1}{2}
 \equiv-\alpha\delta+R_p(q)\Phi^2
 \pmod{\Phi^3}
\]
in Zudilin's congruence, and using $\delta=(q-1)\Phi$, gives
\begin{align*}
 {pN\brack pj}_q\equiv{}&{N\brack j}_{Q}
 -j(N-j)\binom Nj\alpha(q-1)^2\Phi^2\\
 &-j(N-j)\binom Nj
 \left\{N(q-1)R_p
 +\beta\bigl(j(N-j)p-N-2\bigr)(q-1)^3\right\}\Phi^3
 \pmod{\Phi^4}.
\end{align*}
Because $A_p=-2\alpha(q-1)^2$, the coefficient of $\Phi^2$ is
$h_{N,j}A_p$.  After factoring out $h_{N,j}$, the coefficient of
$j(N-j)$ in the term of order $\Phi^3$ is
\[
 -2\beta p(q-1)^3
 =-\frac{p(p^2-1)}{24}(q-1)^3=U_p.
\]
The remaining terms are affine in $N$.  More explicitly, one may take
\[
 V_p=-2(q-1)R_p+2\beta(q-1)^3,
 \qquad
 W_p=4\beta(q-1)^3.
\]
This proves \eqref{eq:third-order-block}.  Finally, setting $N=2$ and
$j=1$ and comparing with the definition \eqref{eq:Tp} yields
\eqref{eq:UVW-T}.  Equivalently,
\[
 T_p=-4(q-1)R_p
 +\frac{(4-p)(p^2-1)}{24}(q-1)^3
 \quad\text{in }\mathbb Q[q]/(\Phi).
\]
\end{proof}

\begin{theorem}\label{thm:generalized-fourth}
For every prime $p$ and every $n\geq0$,
\begin{align}
 E_{pn\mid p}(q)\equiv{}&q^{p^2\binom n2}
 +\binom n2A_p(q)[p]_q^2\notag\\
 &+\binom n2\bigl(T_p(q)+U_p(q)(n-2)(n+1)\bigr)[p]_q^3
 \pmod{[p]_q^4}.
 \tag{6.13}\label{eq:generalized-fourth}
\end{align}
\end{theorem}

\begin{proof}
Define $B_n^{(p)}(q)$ by
\begin{align}
 E_{pn\mid p}(q)\equiv{}&Q^{\binom n2}
 +\binom n2A_p(q)\Phi^2+B_n^{(p)}(q)\Phi^3
 \pmod{\Phi^4}.
 \tag{6.14}\label{eq:Bnp-def}
\end{align}
Substitute \eqref{eq:third-order-block} and \eqref{eq:Bnp-def} into
\eqref{eq:generalized-recurrence}.  Comparing coefficients of $\Phi^3$
gives
\begin{align}
 B_n^{(p)}={}&\sum_{j=1}^{n}(-1)^{j-1}\binom njB_{n-j}^{(p)}\notag\\
 &+\sum_{j=1}^{n}(-1)^{j-1}h_{n,j}
 \left\{U_pj(n-j)+V_pn+W_p
 +2p(q-1)A_p\binom{n-j}{2}\right\}.
 \tag{6.15}\label{eq:Bnp-recurrence}
\end{align}
After division by $\binom nj$, the second summand is a polynomial in $j$
of degree at most four.  Hence the finite-difference argument of Section~4
shows that $B_n^{(p)}$ is a polynomial in $n$ of degree at most four.
Using \eqref{eq:UVW-T}, its first values are
\begin{align*}
 B_0^{(p)}&=B_1^{(p)}=0,\\
 B_2^{(p)}&=T_p,\\
 B_3^{(p)}&=3T_p+12U_p,\\
 B_4^{(p)}&=6T_p+60U_p.
\end{align*}
Therefore
\[
 \Delta^2B_0^{(p)}=T_p,\qquad
 \Delta^3B_0^{(p)}=\Delta^4B_0^{(p)}=12U_p.
\]
Newton interpolation yields
\begin{align*}
 B_n^{(p)}
 &=T_p\binom n2+12U_p\binom n3+12U_p\binom n4\\
 &=\binom n2\bigl(T_p+U_p(n-2)(n+1)\bigr),
\end{align*}
which proves \eqref{eq:generalized-fourth}.
\end{proof}

For $p=2$, one has
\[
 A_2=-1,\qquad U_2=2,\qquad T_2=1
 \quad\text{in }\mathbb Q[q]/(1+q).
\]
Thus Theorem~\ref{thm:generalized-fourth} gives
\[
 T_2+U_2(n-2)(n+1)=2n^2-2n-3,
\]
and recovers Theorem~\ref{thm:main}.  For $p=3$,
\[
 A_3=2q,\qquad U_3=-3-6q,\qquad T_3=-1-2q,
\]
so
\begin{align}
 B_n^{(3)}(q)=-(1+2q)\binom n2(3n^2-3n-5).
 \tag{6.16}\label{eq:p3-B}
\end{align}
For $p=5$,
\[
 A_5=-2(1-q)^2,\qquad U_5=5(1-q)^3,
\]
and
\[
 T_5=1-11q+7q^2-5q^3.
\]
Consequently,
\begin{align}
 B_n^{(5)}(q)=\binom n2\bigl\{&5n^2-5n-9
 +(-15n^2+15n+19)q\notag\\
 &+(15n^2-15n-23)q^2
 +(-5n^2+5n+5)q^3\bigr\}.
 \tag{6.17}\label{eq:p5-B}
\end{align}

\section{Concluding remarks}

We have developed an analytic method for higher $(1+q)$-adic congruences of
the $q$-secant inversion enumerator.  Its principal ingredients are the
reciprocal generating function, local expansions of Gaussian coefficients,
finite differences, and Newton interpolation.  Besides the fourth-order
congruence in Theorem~\ref{thm:main}, the recurrence in Section~5 gives an
effective procedure for computing the expansion modulo $(1+q)^K$ for any
prescribed $K$.

Liu's proof of \eqref{eq:Liu} is combinatorial: natural pairs generate
switching orbits, and three free pairs account for divisibility by
$(1+q)^3$.  It would be interesting to find combinatorial interpretations
of the two higher correction coefficients
\[
 \binom n2(2n^2-2n-3)
\]
and
\[
 -\frac{n(n-1)(n-2)(24n^3-65n-65)}{24}.
\]
Such interpretations should presumably arise from a finer analysis of
switching orbits with three or four free natural pairs.

For the generalized $q$-Euler numbers of Sagan and Zhang, the natural local
parameter is $\Phi_p(q)=[p]_q$.  Zudilin's fourth-order expansion supplies
the required local information for each Gaussian block, while the
Sagan--Zhang recurrence and Newton interpolation determine its cumulative
effect on $E_{pn\mid p}(q)$.  Theorems~\ref{thm:generalized-cubic} and
\ref{thm:generalized-fourth} show that the resulting fourth-order
correction is determined by the central quotient $T_p(q)$ in
\eqref{eq:Tp}, for which the proof of
Lemma~\ref{lem:third-order-block} also gives a $q$-harmonic representation. 
The first-order root-of-unity congruences of Guo and
Zeng~\cite{GuoZeng} hold for generalized $q$-Euler numbers with an
arbitrary block size $k$.  This provides additional motivation for
extending the higher-order results of Section~6 to composite block
sizes.  In that setting one must work separately at the cyclotomic
factors of
\[
 [k]_q=\prod_{\substack{d\mid k\\d>1}}\Phi_d(q),
\]
since a congruence modulo a single $\Phi_d(q)^4$ does not immediately
produce a congruence modulo $[k]_q^4$.

Zudilin's block congruence holds modulo $\Phi_d(q)^4$ for every $d>1$.
This suggests extending the generalized Euler result to composite block
sizes modulo $\Phi_d(q)^4$.  Notice, however, that this does not immediately
give a congruence modulo $[d]_q^4$, because $[d]_q$ has several cyclotomic
factors when $d$ is composite.

The complementary theory of generalized $q$-tangent subsequences
$E_{pn+i\mid p}(q)$, including a $p$-cyclotomic extension of Foata's
divisibility theorem and generalized Catalan formulas for normalized
quotients, will be developed in a forthcoming paper.

\end{document}